\documentclass[11pt, reqno]{amsart}

\title{A characterization of Sophie Germain primes - II}

\usepackage[T1]{fontenc}
\usepackage{amsmath}
\usepackage{amssymb}
\usepackage{amsthm}
\usepackage[left=3.5cm, right=3.5cm, paperheight=11.8in]{geometry}
\usepackage{hyperref}
\usepackage{fancyhdr}
\usepackage{enumitem}
\usepackage{comment}
\usepackage{nicefrac}
\usepackage{mathrsfs}
\usepackage{bm}
\usepackage{graphicx}
\usepackage[utf8]{inputenc}
\usepackage{cancel}
\usepackage{mathtools}

\AtBeginDocument{%
   \def\MR#1{}
}

\newtheorem{thm}{Theorem}[section]

\theoremstyle{definition} 
\newtheorem{defi}[thm]{Definition}
\let\olddefi\defi
\renewcommand{\defi}{\olddefi\normalfont}
\newtheorem{example}[thm]{Example}
\let\oldexample\example
\renewcommand{\example}{\oldexample\normalfont}
\newtheorem{rmk}[thm]{Remark}
\let\oldrmk\rmk
\renewcommand{\rmk}{\oldrmk\normalfont}

\newtheorem{claim}{\textsc{Claim}}

\author[P.~Leonetti]{Paolo Leonetti}
\address{
Universit\'{a} degli Studi dell’Insubria\\ via Monte Generoso 71 \\ Varese 21100\\ Italy}
\email{leonetti.paolo@gmail.com}
\urladdr{\url{https://sites.google.com/site/leonettipaolo}}

\keywords{Complete residue system; permutations; Sophie Germain primes; finite abelian groups.}
\subjclass[2020]{Primary 11A07; Secondary 11A15, 11A41.}

\hypersetup{
    pdftitle={},
    pdfauthor={},
    pdfmenubar=false,
    pdffitwindow=true,
    pdfstartview=FitH,
    colorlinks=true,
    linkcolor=blue,
    citecolor=green,
    urlcolor=cyan
}

\providecommand{\MR}[1]{}

\providecommand{\MR}{\relax\ifhmode\unskip\space\fi MR }

\begin{document}

\begin{abstract} 
\noindent{} 
Let $n\ge 4$ be an even integer. We show that $\{1^{\sigma(1)},2^{\sigma(2)},\ldots,n^{\sigma(n)}\}$ is a complete residue system modulo $n$ for some permutation $\sigma$ of $\{1,2,\ldots,n\}$ if and only if $p:=n/2$ is prime and $p-1$ is squarefree. This complements the results in 
[Int. J. Number Theory \textbf{14} (2018), 653--660], where an analogue characterization has been proved for odd integers. 
\end{abstract}

\maketitle
\thispagestyle{empty}

\section{Introduction and Main result}

A \emph{Sophie Germain prime} is a prime $p$ such that $2p+1$ is also prime. The prime $2p+1$ is then called a \emph{safe prime}. They are useful mainly because safe primes give multiplicative groups with a very simple and strong structure: if $q=2p+1$ is prime, then $(\mathbb{Z}/q\mathbb{Z})^\times$ has order $2p$, hence contains a large subgroup of prime order $p$. This is valuable in discrete-logarithm cryptography, especially Diffie–Hellman-type protocols. More generally, Sophie Germain and safe primes appear in public-key cryptography, pseudorandom number generation, and primality testing, see e.g. \cite{MR2123939, MR1192408}; they also arose historically in Sophie Germain’s work on special cases of Fermat’s Last Theorem \cite{MR616635}. It is remarkable that it is currently an open question whether there exist infinitely many Sophie Germain primes. 

Hereafter, following \cite{MR3786640}, we say that an integer $n\ge 2$ is \emph{nice} if $$\{1^{\sigma(1)},2^{\sigma(2)},\ldots,n^{\sigma(n)}\}$$ is a complete residue system modulo $n$ for some permutation $\sigma$ of $\{1,2,\ldots,n\}$. 
It turns out that Sophie Germain primes are strictly related to the notion of nice numbers:

\begin{thm}\label{thm:oddold} 
\cite[Theorem 1.1]{MR3786640} 
    Let $n\ge 5$ be an odd integer. Then $n$ is nice if and only if $n$ is a safe prime \textup{(}that is, if and only if $(n-1)/2$ is a Sophie Germain prime\textup{)}.
\end{thm}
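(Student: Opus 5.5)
The plan is to treat the two implications separately, beginning with the easier direction: if $n=q$ is a safe prime, then $q$ is nice.

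\textbf{Safe prime implies nice.} Write $q=2p+1$ with $p$ prime. I would construct $\sigma$ explicitly.
\begin{itemize}
\item Put $\sigma(q)=q$, since $q^{\sigma(q)}\equiv 0 \pmod q$ for any exponent.
\item Since $(\mathbb{Z}/q\mathbb{Z})^\times$ is cyclic of order $2p$, the residue of $a^{e}$ depends only on $e \bmod 2p$.
\item Each residue is to be hit exactly once, so I want $x\mapsto x^{\sigma(x)}$ to be a bijection on $\{1,\dots,q-1\}$.
\item The natural choice is exponents coprime to $2p$. These are the odd numbers in $\{1,\dots,q-1\}$ other than $p$, and also $q$ itself. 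An exponent coprime to $2p$ makes $x\mapsto x^e$ a bijection.
\end{itemize}
The catch is that $\sigma$ must use every exponent $1,\dots,q-1$ exactly once, so even exponents cannot be avoided. I would therefore split by order.
\begin{itemize}
\item $x=1$: any exponent gives $1$.
\item $x=-1$: $(-1)^e$ depends only on the parity of $e$.
\item Elements of order $p$ (the nontrivial squares): $x^e$ depends on $e \bmod p$, and even exponents are allowed here.
\item Elements of order $2p$ and order $2$: parity matters.
\end{itemize}
The task is then a combinatorial assignment. The $p-1$ elements of order $p$ plus $x=1$ absorb the even exponents; there are $p$ even numbers in $\{2,\dots,2p\}$, so these $p$ elements take them. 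The remaining elements ($-1$ and the $p-1$ generators) take the odd exponents $1,3,\dots,2p-1$. I then need the images to be a bijection:
\begin{itemize}
\item On squares, $x\mapsto x^{\sigma(x)}$ must permute the order-$p$ subgroup, where the even exponents $2,4,\dots,2p$ reduce to all residues mod $p$.
\item On non-squares, the map must permute the non-squares.
\end{itemize}
Using a generator $g$, each case becomes a linear problem $k\mapsto k\,e_k$ mod $p$ (or mod $2p$). For that I would invoke the classical fact that for prime $p$ one can choose a permutation $(e_k)$ with $k\mapsto k e_k$ again a permutation, for instance $e_k=c$ constant. Here, though, the exponents must be distinct residues, which forces a "complete mapping" style construction: take $e_k \equiv k^{-1}\cdot \pi(k)$ with $\pi$ a suitable permutation of the units mod $p$, for example $\pi(k)=k^2$, which is injective on a subgroup only when appropriate. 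I would fix this with $e_k\equiv 2k^{-1}$ type choices and handle $0$ separately.

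\textbf{Nice implies safe prime.} This direction uses parity and counting.
\begin{itemize}
\item If $n$ is odd and not prime, pick a prime $r\mid n$. The multiples of $r$ can only produce residues divisible by $r$. Counting the residues $\not\equiv 0 \pmod r$ among the $x$ coprime to $r$ gives a contradiction via Euler's theorem and CRT, as in the standard argument. The exponents $\sigma(x)$ for units $x$ must make $x\mapsto x^{\sigma(x)}$ hit all units.
\item For $n=q$ prime, consider the elements whose $\sigma$-exponent is divisible by a prime $\ell\mid q-1$ with $\ell$ odd. Their images lie in the subgroup of $\ell$-th powers, of index $\ell$. There are roughly $(q-1)/\ell$ such exponents, and about $(q-1)/2$ even exponents landing in the squares. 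Comparing these counts with the subgroup sizes forces $q-1$ to have no odd prime factor other than one prime $p$ to the first power. The case $q-1=2^k$ must be excluded separately by a counting argument on the $2$-power subgroup.
\end{itemize}

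I expect the main obstacle to be the explicit construction in the first direction, namely matching exponents to elements so that the map is a bijection. There I would first try an assignment based on the discrete logarithm and reduce it to the existence of orthomorphisms of $\mathbb{Z}/p$.
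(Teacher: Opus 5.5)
Both directions of your proposal have genuine gaps. Note that the paper does not prove this theorem; it only quotes it from \cite{MR3786640}. The closest comparison is the paper's even-case proof, which uses the same reduction to discrete logarithms and gcd-classes. There, however, every exponent class occurs twice, which is why its first claim works for every finite abelian group and no Wilson-type obstruction arises.

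\textbf{Sufficiency.} Your construction for a safe prime $q=2p+1$ fails at its first step. With $\sigma(q)=q$, the exponents of $1,\dots,q-1$ are exactly $1,\dots,2p$, one in each residue class mod $2p$. Writing $x=g^k$ and $e_k=\sigma(g^k)$, you need $k\mapsto ke_k$ to permute $\mathbb{Z}/2p\mathbb{Z}$. This map sends each subgroup $d\mathbb{Z}/2p\mathbb{Z}$ into itself, hence permutes it, hence permutes the units $U$. So $e$ maps $U$ onto $U$, and multiplying over $U$ gives $\prod_{u\in U}u\equiv(\prod_{u\in U}u)^2$, i.e.\ $\prod_{u\in U}u\equiv 1\pmod{2p}$. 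This contradicts the generalized Wilson theorem: the product is $-1$ because $2p$ has a primitive root. Already for $q=7$, no admissible $\sigma$ has $\sigma(7)=7$.

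The same obstruction kills your plan of giving the squares $g^{2j}$ exactly the even exponents $2,4,\dots,2p$, and the non-squares exactly $1,3,\dots,2p-1$. Each set reduces to every residue mod $p$ once, so you would need a complete mapping of the cyclic group $(\mathbb{Z}/p\mathbb{Z})^\times$ of even order, which does not exist. Orthomorphisms of the additive group $\mathbb{Z}/p\mathbb{Z}$ are the wrong object, and $e_k\equiv 2k^{-1}$ makes $ke_k$ constant.

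The missing idea is to use the surplus exponent $q\equiv 1\pmod{2p}$. Redoing the product computation without assuming $\sigma(q)=q$ forces the generators to receive all exponents prime to $2p$ except $q-2$, in particular both $1$ and $q$. This can be realized by a near-complete mapping of a cyclic group of order $M=p-1$. For example, on $\mathbb{Z}/M\mathbb{Z}$ take $f(x)=x$ for $0\le x<M/2$ and $f(x)=x+1$ otherwise: then $x+f(x)$ is bijective, while $f$ hits $0$ twice and misses $M/2$. The nontrivial squares then take $\{2,4,\dots,2p-2\}\setminus\{p+1\}$ together with $q-2$, via the shifted map $f+M/2$; $-1$ takes $p$; and $1$ and $q$ take $p+1$ and $2p$.

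\textbf{Necessity.} Comparing how many exponents are divisible by $\ell$ with the index of the subgroup of $\ell$-th powers can show that $q-1$ is squarefree, which also disposes of $q-1=2^k$ with $k\ge 3$. But no such count can force $q-1=2p$. If $N=q-1$ is squarefree, take $\sigma(q)=q$ and choose $\sigma(g^k)$ with $\gcd(\sigma(g^k),N)=\gcd(k,N)$. Then for every $D\mid N$, $g^{k\sigma(g^k)}$ is a $D$-th power exactly when $g^k$ is, so every count balances. This happens for $q=31$, which is not nice.

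The decisive constraint is again multiplicative. The map $k\mapsto ke_k$ preserves each class $X_d=\{k:\gcd(k,N)=d\}$. So, with $M=N/d$, the exponents used on $X_d$ must multiply to $1$ mod $M$. This fails if they reduce to each unit mod $M$ exactly once and $M\in\{\ell,2\ell\}$. The only slack is the single surplus exponent $q$. Balancing the gcd-classes of exponents shows that the classes $X_d$ not using exactly $\{e\le N:\gcd(e,N)=d\}$ lie on one divisibility chain. That chain cannot contain both $N/\ell_1$ and $N/\ell_2$ for distinct odd primes $\ell_1,\ell_2$, so $N=2p$.

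\textbf{Composite $n$.} Your single-prime count yields no inequality, because the $x$ prime to $r$ and the residues prime to $r$ are equinumerous. After establishing squarefreeness, what works is to count the $x$ whose exponent must be odd because their image is a non-square modulo some prime factor of $n/\gcd(x,n)$. There are at least $(n-1)/2+\varphi(n)/4>(n+1)/2$ of them, since $\varphi(n)\ge 8$, but only $(n+1)/2$ odd exponents are available.
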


In the case of even integers $n$, the following partial result is known: 
\begin{thm}\label{thm:evenold} 
\cite[Theorem 1.2]{MR3786640} 
Let $n\ge 4$ be an even integer. If $n$ is nice, then $n=2p$ for some prime $p$ such that $p-1$ is squarefree. Conversely, if $n=2p$ for some safe prime $p\ge 7$, then $n$ is nice.
\end{thm}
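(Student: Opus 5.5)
We treat the two implications separately. Throughout we use the Chinese remainder theorem and the observation that for each prime $q\mid n$, the residue $x^{\sigma(x)}$ is divisible by $q$ if and only if $q\mid x$. Consequently the $\varphi(n)$ residues coprime to $n$ must come exactly from the $x\in\{1,\dots,n\}$ coprime to $n$.

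\emph{Necessity.} Let $n$ be even and nice. First we show $n$ is squarefree, up to the case $n=4$. Let $q$ be a prime with $q^2\mid n$. The $n/q-n/q^2$ residues that are divisible by $q$ but not by $q^2$ must come from some $x$ with $q\mid x$. But for such $x$ and any exponent $k\ge 2$ we have $q^2\mid x^k$. So all these residues must come from the single $x$ with $\sigma(x)=1$, which forces $n/q-n/q^2\le 1$. Hence $q=2$ and $n=4$; this case is checked by hand, and $4=2\cdot 2$ with $p-1=1$ squarefree. So from now on $n=2m$ with $m$ odd and squarefree.

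Next we show that $m$ is prime, via a similar valuation/counting argument. Suppose $m$ has two prime factors $q\ne r$. Consider the residues divisible by $q$ but coprime to $r$, and compare them with the residues divisible by $qr$. Reducing modulo $m$ and using CRT, the images of the $x$ with $q\mid x$ must realize, in the factor $(\mathbb{Z}/r\mathbb{Z})$, every unit as a power $x^{\sigma(x)}$. We would try to derive a contradiction from the fact that all powers of a fixed residue with large exponent collapse onto a proper subgroup, combined with exponent parity. This is the step we are least sure how to close cleanly.

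Finally, for $p=m$ prime, suppose $q^2\mid p-1$. The units modulo $2p$ form a cyclic group $C\cong(\mathbb{Z}/p\mathbb{Z})^\times$ of order $p-1$. Only odd $x\not\equiv 0 \pmod p$ land there, and they must hit $C$ bijectively. An element whose order has $q$-adic valuation $v_q(p-1)\ge 2$ can only be produced as $x^k$ with $x$ of the same $q$-part and $q\nmid k$. Counting generators of the $q$-Sylow part, and comparing with how many exponents in $\{1,\dots,2p\}$ divisible by $q$ are forced onto the units, should give a contradiction. We expect this counting step, together with the primality step above, to be the main obstacle.

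\emph{Sufficiency.} Let $p=2r+1$ with $r$ prime and $p\ge 7$. We set $\sigma(2p)$ arbitrarily among the leftover exponents, since $(2p)^k\equiv 0$. We give $p$ an odd exponent, since $p^k\equiv p \pmod{2p}$ for $k$ odd. The remaining $x$ split into two classes: odd $x$ and even $x$ that are not multiples of $p$. Each class is a copy of $G=(\mathbb{Z}/p\mathbb{Z})^\times\cong C_{2r}$, with $x^{\sigma(x)}$ determined modulo $2p$ by its value modulo $p$ and the parity of $x$. So we need two bijections $G\to G$ of the form $g\mapsto g^{\sigma(g)}$ that use disjoint exponents. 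The maps $g\mapsto g^k$ with $k$ odd and $r\nmid k$ are automorphisms of $G$. For the other exponents, note that $g^k$ only depends on $k$ modulo $\operatorname{ord}(g)\in\{1,2,r,2r\}$. We would partition $G$ by element order and assign even exponents (and multiples of $r$) to elements of small order, where they still act injectively on the relevant layer; for instance, on the order-$r$ elements any $k$ with $r\nmid k$ works. We then check by counting that exactly $2p$ exponents are consumed.
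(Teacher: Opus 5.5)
There are genuine gaps in both directions.

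\textbf{Sufficiency.} This gap is the more serious one, because this is the part the paper actually proves: the If part of Theorem~\ref{thm:mainneweven} covers every prime $p$ with $p-1$ squarefree, in particular safe primes, where $p-1=2r$. Your reduction to two parity classes, each a copy of $G=(\mathbb{Z}/p\mathbb{Z})^\times$, is right. So is your partition by element order, which via $g=\xi^x$ is exactly the paper's partition of $\mathbb{Z}/s\mathbb{Z}$, $s=p-1$, into the sets $X_d$.

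The assertion that on a layer ``any $k$ with $r\nmid k$ works'' confuses a single exponent with a family of exponents. Since $\sigma$ is injective, different elements of a layer receive different exponents, and then $g\mapsto g^{\sigma(g)}$ need not be injective. Write the generators as $\xi^u$ with $u\in(\mathbb{Z}/2r\mathbb{Z})^\times$. What you need is that $u\mapsto u\,\sigma(\xi^u)$ permutes $(\mathbb{Z}/2r\mathbb{Z})^\times$ in each parity class, and similarly on the order-$r$ layer. This is a complete-mapping problem, not a counting problem.

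It cannot be solved one class at a time with each unit residue used once. The group $(\mathbb{Z}/2r\mathbb{Z})^\times$ is cyclic of even order, so if $u\mapsto k_u$ and $u\mapsto uk_u$ were both bijections, then $\prod_u uk_u=(\prod_u u)^2=1$. On the other hand $\prod_u uk_u=\prod_u u$ is the unique element of order $2$, a contradiction.

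The idea you are missing is Claim~\ref{claim:twocopies}: two permutations $\alpha,\beta$ of a finite abelian group whose quotients $\alpha(x)x^{-1}$, $\beta(x)x^{-1}$ \emph{jointly} cover every element exactly twice. Transported to each $X_d\cong(\mathbb{Z}/(s/d)\mathbb{Z})^\times$ as in Claim~\ref{claim:squarefreeresidues}, it yields exponent residues for both parity classes simultaneously. These are realized by $3,\dots,2p$, which hit each class mod $s$ exactly twice, leaving $1,2$ for $p$ and $2p$. Incidentally, $p^k\equiv p\pmod{2p}$ for every $k\ge1$, not only for odd $k$.

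\textbf{Necessity.} The paper just quotes \cite{MR3786640} here, but your sketch is unfinished. The squarefreeness of $n$ is fine.

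For the primality of $m$, what closes it is a global parity count. If $k$ is even then $x^k$ is a square, so every residue mod $2m$ whose component at some prime $\ell\mid m$ is a quadratic nonresidue must receive an odd exponent. There are $2\bigl(m-\prod_{\ell\mid m}\frac{\ell+1}{2}\bigr)$ such residues but only $m$ odd exponents. Moreover $m>2\prod_{\ell\mid m}\frac{\ell+1}{2}$ as soon as $m$ has two prime factors $q<r$, since $(q-1)(r-1)>2$.

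For $q^2\mid p-1$, counting only the elements whose order has full $q$-adic valuation gives no contradiction. There are $2(p-1)(1-\frac1q)$ of them, while $\{1,\dots,2p\}$ contains $2(p-1)(1-\frac1q)+2$ integers prime to $q$ when $q$ is odd. For $q=2$ you get $p-1$ nonresidues against $p$ odd exponents, again no contradiction.

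You need a descending induction. Let $v=v_q(p-1)$ and $A_j=\{g\in G: v_q(\operatorname{ord} g)=j\}$. An element of $A_j$ with $j\ge1$ can only be $x^k$ with $x\in A_{j+v_q(k)}$. Hence the bijection $g\mapsto g^{\sigma(g)}$ on each parity class maps $A_v$ onto $A_v$, then $A_{v-1}$ onto $A_{v-1}$, and so on down to $A_1$. So every $x$ with $q\mid\operatorname{ord}(x)$ needs $q\nmid\sigma(x)$. That requires $2(p-1)(1-q^{-v})$ exponents prime to $q$, which exceeds the supply once $v\ge2$ (for $q=2$ use $p\ge5$).
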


Accordingly, it has been conjectured in \cite[Conjecture 1.3]{MR3786640} that an integer $n\ge 11$ is nice if and only if $n$ or $n/2$ is a safe prime. However, it has been recently shown in \cite[Theorem 5.5]{MR4932410} that the latter conjecture fails (namely, $n=62$ is nice but neither $n$ nor $n/2$ is a safe prime). 

In this note, we provide the final characterization for even integers $n$, which improves Theorem \ref{thm:evenold} (recall by \cite[Lemma 2.1]{MR3786640} that every $n \in \{2,3,\ldots7\}$ is nice):
\begin{thm}\label{thm:mainneweven} 
Let $n\ge 6$ be an even integer. Then $n$ is nice if and only if $p:=n/2$ is prime and $p-1$ is squarefree. 
\end{thm}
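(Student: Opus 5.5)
The direction ``nice $\Rightarrow$ $p:=n/2$ prime and $p-1$ squarefree'' is exactly the first half of Theorem \ref{thm:evenold}. So only the converse needs proof: if $n=2p$ with $p$ prime and $m:=p-1$ squarefree, we must build a suitable permutation $\sigma$. My plan is to reduce this to a combinatorial statement about the cyclic group $G:=(\mathbb{Z}/p\mathbb{Z})^\times$ of order $m$, and then to use that $m$ is squarefree, factor by factor.

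\emph{Reduction.} By the Chinese remainder theorem, $\mathbb{Z}/2p\mathbb{Z}\cong \mathbb{Z}/2\mathbb{Z}\times\mathbb{Z}/p\mathbb{Z}$. Since $x^k\equiv x \pmod 2$ for every $k\ge 1$, parity is preserved by any exponent. It therefore suffices to make $x\mapsto x^{\sigma(x)} \bmod p$ a bijection onto $\mathbb{Z}/p\mathbb{Z}$ separately on the odd and on the even elements of $\{1,\ldots,2p\}$. Each of these two classes is a complete residue system modulo $p$, since two members differ by $2k$ with $0<k<p$. The elements $p$ and $2p$ are the ones $\equiv 0 \pmod p$, and their powers stay $\equiv 0$ whatever their exponents.

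For a nonzero residue $r$, the value $r^e$ depends only on $e \bmod m$. Reducing $1,\ldots,2p$ modulo $m$, every class occurs twice, except the classes of $1$ and $2$, which occur three times (because $2p=2m+2$). We give one exponent from class $1$ and one from class $2$ to the elements $p$ and $2p$. The task then becomes the following. Find maps $e,f\colon G\to\mathbb{Z}/m\mathbb{Z}$ such that
\begin{itemize}
\item $r\mapsto r^{e(r)}$ is a permutation of $G$ (this handles the odd residues);
\item $r\mapsto r^{f(r)}$ is a permutation of $G$ (this handles the even residues);
\item the multiset $\{e(r)\}\uplus\{f(r)\}$ equals two copies of $\mathbb{Z}/m\mathbb{Z}$.
\end{itemize}
Once $e$ and $f$ exist, lifting them back to actual exponents in $\{1,\ldots,2p\}$ is routine bookkeeping.

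\emph{Using squarefreeness.} Write $m=q_1\cdots q_s$ with distinct primes $q_i$. Then $G\cong\prod_i C_{q_i}$ and $\mathbb{Z}/m\mathbb{Z}\cong\prod_i \mathbb{Z}/q_i\mathbb{Z}$ compatibly: $r^e$ is computed componentwise as $r_i^{e_i}$. If each factor admits a pair $(e^{(i)},f^{(i)})$ as above, I would like to take products. The permutation conditions pass to products immediately. The multiset condition does not, since taking products of the factor pairs does not simply produce two copies of $\mathbb{Z}/m\mathbb{Z}$; to handle this I would aim for a stronger factor statement. Namely, for each factor I want $e^{(i)}$ and $f^{(i)}$ to be bijections onto $\mathbb{Z}/q_i\mathbb{Z}$. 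Alternatively, the odd and even copies can be coupled via a Latin-square type pairing, choosing exponents on $C_{q_1}\times C_{q_1}$ and recombining. Squarefreeness is exactly what makes every Sylow factor cyclic of prime order, so that $r^e$ is controlled by $e \bmod q_i$ alone.

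\emph{The prime-order case (main obstacle).} Let $q$ be prime. The factor $q=2$ is easy to check by hand. For odd $q$, exponent $0$ sends everything to $1$, so in each copy the exponent $0$ must go to the identity. On the nonidentity elements, written as $a\in H:=(\mathbb{Z}/q\mathbb{Z})^\times$ via $r=g^a$, we need maps $e,f\colon H\to H$ such that $a\mapsto a\,e(a)$ and $a\mapsto a\,f(a)$ are permutations of $H$, while $e\uplus f$ covers $H$ exactly twice. Requiring $e$ itself to be a bijection would mean a complete mapping of the cyclic group $H$ of even order, which does not exist. So the two copies must genuinely differ. This is the crux, and it is presumably why a new argument is needed beyond the safe-prime case of Theorem \ref{thm:evenold}.

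I would first try $e(a)=\pi(a)a^{-1}$ and $f(a)=\pi'(a)a^{-1}$ for permutations $\pi,\pi'$ of $H$ chosen so that the combined multiset of quotients $\pi(a)/a$ and $\pi'(a)/a$ is $2H$. For example, take $\pi(a)=a^{k}$ and $\pi'(a)=a^{k'}$ with $k,k'$ units modulo $q-1$ and $k-1$, $k'-1$ arranged suitably. If that fails, I would use the freedom at the prime level by moving exponents between components. This is possible because, in the global count, only the total multiset over all of $\mathbb{Z}/m\mathbb{Z}$ matters, not the multiset in each component.
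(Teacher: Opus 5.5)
Your reduction is correct and matches the paper's. Parity is preserved by powering, the odd and the even elements of $\{1,\ldots,2p\}$ are each complete residue systems modulo $p$, and exponents matter only modulo $m=p-1$. After assigning exponents $1,2$ to $p,2p$, you need $e,f$ as in your three bullet points; writing $r=\xi^x$, this is exactly Claim~\ref{claim:squarefreeresidues}.

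\textbf{The gap.} That combinatorial statement is never proved, and both routes you sketch toward it fail.
\begin{itemize}
\item Your strengthened factor statement (each $e^{(i)},f^{(i)}$ a bijection of $\mathbb{Z}/q_i\mathbb{Z}$) is impossible for every odd $q_i$, by your own later remark. Bijectivity forces $e^{(i)}(0)=0$, and then $e^{(i)}$ restricts to a complete mapping of the cyclic group $(\mathbb{Z}/q_i\mathbb{Z})^\times$ of even order, which does not exist.
\item Since $m=p-1\ge 4$ is even and squarefree, it always has an odd prime factor. So the product plan never applies, and the ``Latin-square pairing'' alternative is not specified.
\item At the prime level, $\pi(a)=a^k$, $\pi'(a)=a^{k'}$ cannot work. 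The exponents $k,k'$ must be coprime to the even number $q-1$, hence odd. So every quotient $a^{k-1}$, $a^{k'-1}$ is a square, and the combined multiset lies in an index-$2$ subgroup.
\item The closing sentence about moving exponents between components is a hope, not an argument.
\end{itemize}

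\textbf{First missing idea: the lemma of Claim~\ref{claim:twocopies}.} Every finite abelian group $G$ has permutations $\alpha,\beta$ such that $\{\alpha(x)x^{-1}\}\uplus\{\beta(x)x^{-1}\}$ covers $G$ exactly twice. For $\#G$ odd, take $\alpha=\beta\colon x\mapsto x^2$. For $\#G$ even, take an index-$2$ subgroup $H$ and $t\notin H$, and get $\alpha_0,\beta_0$ on $H$ by induction. Then set $\alpha(h)=\alpha_0(h)$, $\alpha(th)=t\beta_0(h)$, $\beta(h)=t\alpha_0(h)$, $\beta(th)=t^2\beta_0(h)$. The $\alpha$-quotients cover $H$ twice and the $\beta$-quotients cover $tH$ twice. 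This is the genuine coupling of the two copies that you recognized is needed but did not find.

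\textbf{Second missing idea: decompose by $\gcd$, not by primes.} Use the partition $\mathbb{Z}/m\mathbb{Z}=\bigsqcup_{d\mid m}X_d$ with $X_d=\{x:\gcd(x,m)=d\}$.
\begin{itemize}
\item Squarefreeness gives $\gcd(d,m/d)=1$. So by the Chinese remainder theorem, $X_d$ is in bijection with the group $G_d=(\mathbb{Z}/(m/d)\mathbb{Z})^\times$, which is generally not cyclic.
\item Write $x=du$ with $u\in G_d$. Let $\rho_1(x)$ be the residue that is $\equiv 0\pmod d$ and $\equiv\alpha_d(u)u^{-1}\pmod{m/d}$, and define $\rho_2$ likewise with $\beta_d$.
\item Then $x\rho_1(x)\equiv d\,\alpha_d(u)\pmod m$, so $x\mapsto x\rho_i(x)$ permutes each $X_d$. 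The multiset condition holds class by class, so nothing has to be multiplied across primes.
\end{itemize}
This is why the lemma is needed for the general finite abelian groups $(\mathbb{Z}/(m/d)\mathbb{Z})^\times$, not only for the cyclic groups $(\mathbb{Z}/q\mathbb{Z})^\times$.
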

The proof of Theorem \ref{thm:mainneweven} follows in the next Section.


\section{Proof of Theorem \ref{thm:mainneweven}}\label{sec:proof}

\textsc{Only If part}. It follows by Theorem \ref{thm:evenold}. 

\medskip

\textsc{If part}. Suppose that $n\ge 6$ an even integer such that $p:=n/2$ is prime and $s:=p-1$ is squarefree. Let us recall that $n=6$ is nice by \cite[Lemma 2.1]{MR3786640}. Hence, let us suppose hereafter that $p\ge 5$. We shall prove that $2p$ is nice.

We start with an auxiliary result on finite abelian groups.

\begin{claim}\label{claim:twocopies}
Let $G$ be a finite abelian group. Then there exist permutations $\alpha$ and $\beta$ of $G$ such that the multiset
\begin{equation}\label{eq:multiset}
\{\alpha(x)x^{-1}:x\in G\}\cup\{\beta(x)x^{-1}:x\in G\}
\end{equation}
contains every element of $G$ exactly twice.
\end{claim}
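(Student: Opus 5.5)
Since $G$ is abelian, we may use additive notation, in which the claim asks for permutations $\alpha,\beta$ of $G$ such that the maps $x\mapsto \alpha(x)-x$ and $x\mapsto \beta(x)-x$ together hit every element of $G$ exactly twice. The plan is a direct construction: take $\beta(x):=-\alpha(x)$ composed with a suitable twist, or more simply take $\alpha$ and $\beta$ so that the two difference maps are "complementary" to each other. The cleanest choice I would try first is
\[
\alpha(x):=2x \quad\text{is not a permutation in general,}
\]
so instead I would use the pair $\alpha=\mathrm{id}$ and $\beta$ arbitrary? That gives $0$ with multiplicity $|G|$ from $\alpha$, which fails. Hence the real idea is to pair $x$ with $-x$: set $\alpha(x):=-x$, whose difference is $-2x$, and $\beta(x):=x+c$ … but this again yields only $2G$ with multiplicities.

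The construction I will actually use exploits that $\{\alpha(x)-x\}$ and $\{\beta(x)-x\}$ may be chosen via a single bijection and its inverse. Let $\alpha$ be any permutation and $\beta:=\alpha^{-1}$. Then $\{\beta(y)-y: y\in G\}=\{x-\alpha(x): x\in G\}$ (substituting $y=\alpha(x)$), so the multiset \eqref{eq:multiset} is $D\cup(-D)$, where $D=\{\alpha(x)-x\}$. Thus it suffices to find a permutation $\alpha$ with $D\cup (-D)$ equal to twice $G$. Now decompose $G=\{0\}\cup I\cup P\cup(-P)$, where $I$ is the set of involutions ($2g=0$, $g\neq 0$) and $P$ contains one element from each pair $\{g,-g\}$ with $g\neq -g$. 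The requirement becomes: $D$ contains $0$ once, each involution once, and for each pair $\{g,-g\}$ the multiset $D$ contains exactly two elements of $\{g,-g\}$ (counted with multiplicity) — e.g. $g$ and $-g$, or $g$ twice. In particular, it suffices that $D$ itself be all of $G$, i.e.\ that $\alpha$ be a complete mapping (orthomorphism); by Hall–Paige this exists iff the Sylow $2$-subgroup is trivial or noncyclic, so the remaining case is when $G$ has a nontrivial cyclic Sylow $2$-subgroup.

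For that case, write $G=\mathbb{Z}_{2^k}\times H$ with $|H|$ odd. On $H$ the map $h\mapsto 2h$ is an orthomorphism (differences $h$ exhaust $H$), and products of orthomorphisms-type data behave coordinatewise, so the problem reduces to the cyclic group $\mathbb{Z}_{m}$ with $m=2^k$ (and then take the product: if $D_1\cup(-D_1)=2\mathbb{Z}_m$ and $D_2=H$, then $D=D_1\times D_2$ gives $D\cup(-D)=2G$ because $-D_2=D_2$ as sets). On $\mathbb{Z}_m$, $m$ even, I would take $\alpha(x):=-x-1$... more concretely I will try $\alpha(x)=2x$ for $0\le x<m/2$ and suitable wraparound, aiming for $D=\{0,1,\dots,m/2\}$-type sets: choose $\alpha$ so that $D$ contains $0$, $m/2$ once each and exactly one of $\{g,-g\}$ twice or both once. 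A natural explicit candidate is $\alpha(x)=-x$ perturbed: differences $-2x$ hit the even residues twice, so I would modify on half the elements (e.g.\ $\alpha(x)=-x+1$ for $x$ in a suitable half) to shift to odd residues. Finding and verifying this explicit cyclic permutation is the main obstacle; if it resists, I would instead argue for $\mathbb{Z}_m$ via the known fact that $\mathbb{Z}_m$ for $m$ even admits a permutation whose differences miss exactly one element and repeat one, and check that the miss/repeat pattern can be arranged to be $\{g\}$ repeated and $-g$ missed, which is exactly allowed.
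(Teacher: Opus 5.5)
There is a genuine gap, and it lies in your very first reduction. Taking $\beta=\alpha^{-1}$ forces the second half of the multiset to be $-D$. For every abelian group whose Sylow $2$-subgroup has order exactly $2$, this makes the problem unsolvable.

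The smallest case is $G=\mathbb{Z}/2\mathbb{Z}$. Both permutations are involutions, with difference multisets $\{0,0\}$ and $\{1,1\}$, so $D\cup(-D)$ is $\{0,0,0,0\}$ or $\{1,1,1,1\}$.

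In general, suppose $G$ has a unique involution $z$ and you require $D\cup(-D)$ to be $G$ counted twice. Then $D$ contains $0$ and $z$ exactly once. Each pair $\{g,-g\}$ with $g\neq -g$ contributes $0$ or $\pm 2g$ to the sum of $D$. Since $\sum_{x}(\alpha(x)-x)=0$, you get $z\in\{2g:g\in G\}$. This is false when the Sylow $2$-subgroup is $\mathbb{Z}/2\mathbb{Z}$, e.g.\ for $G=\mathbb{Z}/6\mathbb{Z}$ or $\mathbb{Z}/10\mathbb{Z}$.

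Your fallback fails for the same reason. A near-complete mapping repeating $g$ and missing $-g$ has difference sum $z+2g$, so it needs an element $g$ of order $4$.

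These groups are not a side issue. In Claim~\ref{claim:squarefreeresidues} the statement is applied to $(\mathbb{Z}/m\mathbb{Z})^\times$, which is $\mathbb{Z}/2\mathbb{Z}$ for $m=3$, i.e.\ whenever $3\mid p-1$. When the Sylow $2$-subgroup is $\mathbb{Z}/2^k\mathbb{Z}$ with $k\ge 2$, your ansatz is at least consistent: on $\mathbb{Z}/4\mathbb{Z}$, the map $0\mapsto 0$, $1\mapsto 2$, $2\mapsto 3$, $3\mapsto 1$ gives $D=\{0,1,1,2\}$. However, you never construct the required permutation in general; you only flag it as the main obstacle.

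The missing idea is that $\beta$ must be allowed to differ from $\alpha^{-1}$, so that the two halves of the multiset need not be negatives of each other. The paper does this by induction on $\# G$:
\begin{itemize}
\item For odd order it takes $\alpha=\beta=(x\mapsto x^2)$.
\item For even order it picks a subgroup $H$ of index $2$ and $t\notin H$. It lets $\alpha$ preserve both cosets ($\alpha=\alpha_0$ on $H$, $\alpha(th)=t\beta_0(h)$), so the quotients of $\alpha$ fill $H$ twice. It lets $\beta$ swap the cosets ($\beta(h)=t\alpha_0(h)$, $\beta(th)=t^2\beta_0(h)$), so the quotients of $\beta$ fill $tH$ twice.
\end{itemize}
For $G=\mathbb{Z}/2\mathbb{Z}$ this yields $\alpha=\mathrm{id}$ and $\beta$ the transposition, which is exactly the pair your ansatz excludes.

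If you prefer to keep the flavour of your argument, M.~Hall's 1952 theorem also closes the gap directly. It says that any multiset of $\# G$ elements of $G$ with zero sum is the difference multiset of some permutation of $G$. In the unique-involution case, take the two halves to be $G$ with $z$ replaced by $0$, and $G$ with $0$ replaced by $z$. Both sum to $0$, because $\sum_{g\in G}g=z$.
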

\begin{proof}
We proceed by induction on $\# G$.

If $\# G$ is odd, define 
$\alpha(x)=\beta(x)=x^2$  
for each $x\in G$. Since $\# G$ is odd, the map $x\mapsto x^2$ is a permutation of $G$. Moreover, 
$\alpha(x)x^{-1}=\beta(x)x^{-1}=x$,  
for each $x$. Hence every element of $G$ occurs exactly twice. 

Let us suppose now that $\# G$ is even. Since $G$ is finite abelian of even order, it has a subgroup $H$ of index $2$. Choose $t\in G\setminus H$. Then 
$G=H\sqcup tH$ 
and $t^2\in H$. By the induction hypothesis, there exist permutations $\alpha_0$ and $\beta_0$ of $H$ such that
$$
\{\alpha_0(h)h^{-1}:h\in H\}\cup\{\beta_0(h)h^{-1}:h\in H\}
$$
contains every element of $H$ exactly twice.

Define maps $\alpha,\beta:G\to G$ by
$$
\alpha(h)=\alpha_0(h),\text{ }\text{ }\alpha(th)=t\beta_0(h)
$$
and
$$
\beta(h)=t\alpha_0(h),\text{ }\text{ }\beta(th)=t^2\beta_0(h)
$$
for each $h\in H$. 
Then $\alpha$ and $\beta$ are permutations of $G$. 
Indeed, $\alpha$ 
maps $H$ bijectively onto $H$ and $tH$ bijectively onto $tH$. Similarly, $\beta$ 
maps $H$ bijectively onto $tH$ and $tH$ bijectively onto $H$.

For $h\in H$, we have
$$
\alpha(h)h^{-1}=\alpha_0(h)h^{-1}.
$$
Moreover, using that $G$ is abelian,
$$
\alpha(th)(th)^{-1}=
t\beta_0(h)h^{-1}t^{-1}=
\beta_0(h)h^{-1}.
$$
Therefore the quotients coming from $\alpha$ contain every element of $H$ exactly twice.

At the same time,
$$
\beta(h)h^{-1}=t\alpha_0(h)h^{-1}
$$
and
$$
\beta(th)(th)^{-1}=
t^2\beta_0(h)h^{-1}t^{-1}=
t\beta_0(h)h^{-1}.
$$
Therefore the quotients coming from $\beta$ contain every element of $tH$ exactly twice. Hence the whole multiset 
defined in \eqref{eq:multiset} 
contains every element of $G$ exactly twice. 
\end{proof}

We shall use Claim \ref{claim:twocopies} to construct suitable residues for the exponents.

\begin{claim}\label{claim:squarefreeresidues}
There exist two maps 
$\rho_1,\rho_2:\mathbb{Z}/s\mathbb{Z}\to \mathbb{Z}/s\mathbb{Z}$  
such that the multiset
$$
\{\rho_1(x):x\in \mathbb{Z}/s\mathbb{Z}\}\cup\{\rho_2(x):x\in \mathbb{Z}/s\mathbb{Z}\}
$$
contains every element of $\mathbb{Z}/s\mathbb{Z}$ exactly twice, and both maps
$$
x\mapsto x\rho_1(x)
\quad \text{ and }\quad 
x\mapsto x\rho_2(x)
$$
are permutations of $\mathbb{Z}/s\mathbb{Z}$.
\end{claim}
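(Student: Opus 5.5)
The plan is to split $\mathbb{Z}/s\mathbb{Z}$ into pieces that are groups under multiplication, and to apply Claim \ref{claim:twocopies} separately to each piece. This is where squarefreeness of $s$ is used.

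\emph{Step 1 (decomposition).} For each divisor $d$ of $s$, let
$$S_d:=\{x\in \mathbb{Z}/s\mathbb{Z}: \gcd(x,s)=d\}.$$
The sets $S_d$ partition $\mathbb{Z}/s\mathbb{Z}$. Since $s$ is squarefree, $\gcd(d,s/d)=1$, so the Chinese remainder theorem gives a ring isomorphism
$$\mathbb{Z}/s\mathbb{Z}\cong \mathbb{Z}/d\mathbb{Z}\times \mathbb{Z}/(s/d)\mathbb{Z}.$$
An element $x$ lies in $S_d$ exactly when its image is $(0,u)$ with $u\in(\mathbb{Z}/(s/d)\mathbb{Z})^\times$: a squarefree $s/d$ has $\gcd(x,s/d)=1$ iff no prime of $s/d$ divides $x$. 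Consequently $S_d$ is closed under multiplication, and it is a finite abelian group isomorphic to $(\mathbb{Z}/(s/d)\mathbb{Z})^\times$. Its identity is the idempotent $e_d\leftrightarrow(0,1)$, and inverses are taken inside $S_d$. Without squarefreeness this fails: for instance $x=2$ modulo $4$ is not of the form (idempotent)$\times$(unit) in a group structure. This step is the crux.

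\emph{Step 2 (apply Claim \ref{claim:twocopies} on each stratum).} For each $d\mid s$, Claim \ref{claim:twocopies} applied to $G=S_d$ gives permutations $\alpha_d,\beta_d$ of $S_d$. On $S_d$ define
$$\rho_1(x):=\alpha_d(x)x^{-1},\qquad \rho_2(x):=\beta_d(x)x^{-1},$$
where $x^{-1}$ is the inverse of $x$ in $S_d$. The product $\alpha_d(x)x^{-1}$, computed in $S_d$, coincides with the product in $\mathbb{Z}/s\mathbb{Z}$. Hence $\rho_1,\rho_2$ are well-defined maps $\mathbb{Z}/s\mathbb{Z}\to\mathbb{Z}/s\mathbb{Z}$, and each maps $S_d$ into $S_d$.

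\emph{Step 3 (verification).} For $x\in S_d$, we have $x\rho_1(x)=x\,x^{-1}\alpha_d(x)=e_d\alpha_d(x)=\alpha_d(x)$, because $e_d$ is the identity of $S_d$ and $\alpha_d(x)\in S_d$. So $x\mapsto x\rho_1(x)$ restricts to the permutation $\alpha_d$ on every block $S_d$. Since the blocks partition $\mathbb{Z}/s\mathbb{Z}$, this map is a permutation of $\mathbb{Z}/s\mathbb{Z}$; the same argument with $\beta_d$ handles $\rho_2$. Finally, the values of $\rho_1$ and $\rho_2$ on $S_d$ form exactly the multiset \eqref{eq:multiset} for $G=S_d$, which contains each element of $S_d$ exactly twice. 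Taking the union over $d\mid s$, every element of $\mathbb{Z}/s\mathbb{Z}$ appears exactly twice in the combined multiset.

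The only delicate point is Step 1, namely checking that each stratum is a genuine group inside $(\mathbb{Z}/s\mathbb{Z},\cdot)$ with its own identity. As a degenerate check, $d=s$ gives $S_s=\{0\}$, the trivial group, and there $\rho_1(0)=\rho_2(0)=0$.
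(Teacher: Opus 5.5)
Your proof is correct and follows essentially the same route as the paper. Both arguments partition $\mathbb{Z}/s\mathbb{Z}$ by $\gcd(x,s)=d$, use squarefreeness and the Chinese remainder theorem to identify each stratum with $(\mathbb{Z}/(s/d)\mathbb{Z})^\times$, and apply Claim \ref{claim:twocopies} stratum by stratum. The only difference is packaging: you treat $S_d$ itself as a group whose identity is the CRT idempotent, so $x\rho_1(x)=\alpha_d(x)$ is immediate, whereas the paper works in $G_d$, writes arguments as $x=du$, and transports values through $\theta_d$, which is exactly your group isomorphism $G_d\to S_d$.
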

\begin{proof}
For each divisor $d$ of $s$, define 
$$
X_d=\{x\in \mathbb{Z}/s\mathbb{Z}:\mathrm{gcd}(x,s)=d\},
$$ 
where $\mathrm{gcd}(x,s)$ means the greatest common divisor of $s$ with any integer representative of $x$. The sets $X_d$, as $d$ ranges over the divisors of $s$, form a partition of $\mathbb{Z}/s\mathbb{Z}$.

First, consider $d=s$. Then $X_s=\{0\}$, and we set
$$
\rho_1(0)=\rho_2(0)=0.
$$

Now, suppose that $d<s$, and define $m=s/d$.  
Since $s$ is squarefree, we have $\mathrm{gcd}(d,m)=1$. Moreover, every element $x\in X_d$ can be written uniquely in the form 
$
x=du\pmod{s},
$ 
where $u$ belongs to the group of units of $G_d=(\mathbb{Z}/m\mathbb{Z})^\times$. 
Thanks to Claim \ref{claim:twocopies}, there exist permutations $\alpha_d$ and $\beta_d$ of $G_d$ such that
$$
\{\alpha_d(u)u^{-1}:u\in G_d\}\cup\{\beta_d(u)u^{-1}:u\in G_d\}
$$
contains every element of $G_d$ exactly twice.

We define a map 
$\theta_d:G_d\to \mathbb{Z}/s\mathbb{Z}$ 
as follows. For $r\in G_d$, let $\theta_d(r)$ be the unique residue modulo $s$ satisfying
$$
\theta_d(r)\equiv 0\pmod d
\quad \text{ and }\quad 
\theta_d(r)\equiv r\pmod m,
$$
which is well defined by the Chinese remainder theorem. 

We claim that $\theta_d$ is a bijection from $G_d$ onto $X_d$. Indeed, if $q$ is a prime divisor of $d$, then $\theta_d(r)\equiv 0\pmod q$. If $q$ is a prime divisor of $m$, then $\theta_d(r)\equiv r\not\equiv 0\pmod q$, since $r$ is a unit modulo $m$. Since $s=dm$ is squarefree, it follows that 
$\mathrm{gcd}(\theta_d(r),s)=d$.  
Thus $\theta_d(r)\in X_d$. 
Conversely, let $e\in X_d$. Then, by definition, 
$\mathrm{gcd}(e,s)=d$.  
Since $s=dm$ and $\mathrm{gcd}(d,m)=1$, it follows that
$e\equiv 0\pmod d$  
and that the reduction of $e$ modulo $m$ is a unit. In other words,
$e\bmod m\in G_d$.  
By construction, $\theta_d(e\bmod m)$ is the unique residue class modulo $s$ satisfying
$$
\theta_d(e\bmod m)\equiv 0\pmod d
\quad \text{ and }\quad 
\theta_d(e\bmod m)\equiv e\pmod m.
$$
But the residue class $e$ itself satisfies these two congruences. Hence, by the uniqueness part of the Chinese remainder theorem, 
$\theta_d(e\bmod m)=e$  
in $\mathbb Z/s\mathbb{Z}$. Therefore every element of $X_d$ belongs to the image of $\theta_d$, and $\theta_d$ is a bijection from $G_d$ onto $X_d$.

At this point, for $x\in X_d$, write $x=du$ with $u\in G_d$, and define
$$
\rho_1(x)=\theta_d(\alpha_d(u)u^{-1})
\quad \text{ and }\quad 
\rho_2(x)=\theta_d(\beta_d(u)u^{-1}).
$$
Then 
$\rho_1(x)\equiv \alpha_d(u)u^{-1}\pmod m$,  
hence 
$u\rho_1(x)\equiv \alpha_d(u)\pmod m$.  
Multiplying by $d$, we obtain
$$
x\rho_1(x)=du\rho_1(x)\equiv d\alpha_d(u)\pmod{s}.
$$
As $u$ runs through $G_d$, also $\alpha_d(u)$ runs through $G_d$. Therefore the values $x\rho_1(x)$, with $x\in X_d$, run through exactly $X_d$. The same argument applies to the values $x\rho_2(x)$.

Since this holds for every divisor $d$ of $s$, and since for $d=s$ we have $0\rho_1(0)=0\rho_2(0)=0$, both maps
$x\mapsto x\rho_1(x)$ and $x\mapsto x\rho_2(x)$ are permutations of $\mathbb{Z}/s\mathbb{Z}$. 

It remains to check the multiplicities of the values of $\rho_1$ and $\rho_2$. For $d=s$, the value $0$ occurs twice. For $d<s$, thanks to Claim \ref{claim:twocopies}, the values 
$\alpha_d(u)u^{-1}$ and $\beta_d(u)u^{-1}$,  
as $u$ runs through $G_d$, contain every element of $G_d$ exactly twice. Lastly, since $\theta_d$ is a bijection from $G_d$ onto $X_d$, the values
$\rho_1(x)$ and $\rho_2(x)$,  
as $x$ runs through $X_d$, contain every element of $X_d$ exactly twice. Taking into account the sets $X_d$ form a partition $\mathbb{Z}/s\mathbb{Z}$, this completes the proof.
\end{proof}

To finish the proof, let $\xi$ be a generator of the group of units of $\mathbb{Z}/p\mathbb{Z}$. For each $x\in \mathbb{Z}/s\mathbb{Z}$, let $a_x$ be the unique odd integer in $\{1,\ldots,2p\}$ and let $b_x$ be the unique even integer in $\{1,\ldots,2p\}$ such that
$$
a_x\equiv b_x \equiv \xi^x\pmod p.
$$
Of course, $a_x$ and $b_x$ are well defined. Moreover,
$$
\{a_x:x\in \mathbb{Z}/s\mathbb{Z}\}\cup\{b_x:x\in \mathbb{Z}/s\mathbb{Z}\}=
\{1,\ldots,2p\}\setminus\{p,2p\}.
$$

Let $\rho_1,\rho_2:\mathbb{Z}/s\mathbb{Z}\to \mathbb{Z}/s\mathbb{Z}$ be the maps given by Claim \ref{claim:squarefreeresidues}. Since 
$2p=2s+2$, the integers $3,4,\ldots,2p$ contain every residue class modulo $s$ exactly twice. On the other hand, by Claim \ref{claim:squarefreeresidues}, the multiset
$$
\{\rho_1(x):x\in \mathbb{Z}/s\mathbb{Z}\}\cup\{\rho_2(x):x\in \mathbb{Z}/s\mathbb{Z}\}
$$
also contains every residue class modulo $s$ exactly twice. Hence we can choose pairwise distinct integers 
$e_x,f_x\in\{3,4,\ldots,2p\}$ for each $x\in \mathbb{Z}/s\mathbb{Z}$ 
such that
$$
\forall x\in \mathbb{Z}/s\mathbb{Z}, \qquad 
e_x\equiv \rho_1(x)\pmod s
\quad \text{ and }\quad 
f_x\equiv \rho_2(x)\pmod s.
$$

\medskip

At this point, define a permutation $\sigma$ of $\{1,\ldots,2p\}$ by $\sigma(p)=1$, $\sigma(2p)=2$, and 
$$
\forall x \in \mathbb{Z}/s\mathbb{Z}, \qquad 
\sigma(a_x)=e_x \quad \text{ and }\quad \sigma(b_x)=f_x.
$$ 
This is a permutation because the numbers $p,2p$, together with the $a_x$'s and $b_x$'s, are all the elements of $\{1,\ldots,2p\}$, while the assigned values are exactly $1,2,\ldots,2p$.

It is clear that $p^{\sigma(p)}\equiv p\pmod {2p}$ and $(2p)^{\sigma(2p)}\equiv 0\pmod {2p}$. 
Now fix $x\in \mathbb{Z}/s\mathbb{Z}$. Since $e_x\equiv \rho_1(x)\pmod s$ and $s=p-1$, Fermat's little theorem gives
$$
a_x^{\sigma(a_x)}= a_x^{e_x}
\equiv (\xi^x)^{e_x}
\equiv \xi^{x\rho_1(x)} \pmod p.
$$
Moreover, $a_x^{e_x}$ is odd. Since the map 
$x\mapsto x\rho_1(x)$ is a permutation of $\mathbb{Z}/s\mathbb{Z}$, the residues
$$
\{a_x^{\sigma(a_x)}: x\in \mathbb{Z}/s\mathbb{Z}\} 
$$ 
are exactly the odd residues modulo $2p$ which are not divisible by $p$. 
With an analogous reasoning, we have $b_x^{\sigma(b_x)}\equiv \xi^{x\rho_2(x)}\pmod p$, hence 
the residues
$$
\{b_x^{\sigma(b_x)}: x\in \mathbb{Z}/s\mathbb{Z}\}
$$
are exactly the even residues modulo $2p$ which are not divisible by $p$.
 
It follows by construction that
$$
\{1^{\sigma(1)},2^{\sigma(2)},\ldots,(2p)^{\sigma(2p)}\}
$$
is a complete residue system modulo $2p$. Therefore $n$ is nice. This concludes the proof of Theorem \ref{thm:mainneweven}. 


\subsection{Acknoweledgments} The author is grateful to Krzysztof Kowitz (University of Gdańsk) for useful discussions on the topic.

\bibliographystyle{amsplain}
\bibliography{refs}

\end{document}